\documentclass[a4paper]{article}
\usepackage[margin=30mm]{geometry}
\usepackage{amsmath}
\usepackage{amssymb}
\usepackage{amsthm}
\usepackage{braket}
\usepackage{titlesec}
\usepackage{titlefoot}
\usepackage{comment}
\usepackage{graphicx}
\usepackage{bm}
\titleformat*{\section}{\normalsize\bfseries}
\titleformat*{\subsection}{\normalsize\bfseries}
\allowdisplaybreaks

\newtheorem{thm}{Theorem}

\newtheorem{prop}[thm]{Proposition}

\makeatletter
  \makeatother
 

\begin{document}

\title{
\vspace{-1.25cm}
\Large{\bf Mathematical and Numerical Analysis for Some \\
Nonlinear Second-order Ordinary Differential \\
Equations of Duffing Type}}
\author{Yusuke Kunimoto and Ikki Fukuda
}
\date{}
\maketitle

\footnote[0]{This manuscript is the preprint version of the original paper published in ``The 43rd JSST Annual International Conference on Simulation Technology, Conference Proceedings (2024) 494--501''.}

\vspace{-1cm}
\begin{abstract}
In this paper, we consider the initial value problem for some nonlinear second-order ODEs of Duffing type. We study the large time behavior of the solutions to this problem, from both the perspectives of mathematical and numerical analysis. First, we derive the decay estimate of the solutions, by using the energy method. Moreover, we numerically investigate the large time behavior of the energy function related to this problem, by using a structure-preserving difference method.
\end{abstract}

\medskip
\noindent
{\bf Keywords:} 
Nonlinear second-order ODEs; Duffing type equations; Decay estimate for solutions; Structure-preserving numerical method.

\section{Introduction}  

In this paper, we consider the initial value problem for the following nonlinear second-order ordinary differential equations of Duffing type: 
\begin{align}\label{Duffing}
\begin{cases}
x''(t)+\mu x'(t)+\alpha x(t)^p=0, \ \ t>0, \\
x(0)=x_0, \ x'(0)=x_1, \ \ (x_0, x_1) \in \mathbb{R}^2, 
\end{cases}
\end{align}
where $p\ge3$ is an odd number, $\mu\ge 0$ and $\alpha>0$. 
This equation \eqref{Duffing} is known as a mathematical model for describing nonlinear oscillation phenomena. 
For example, suppose we introduce a nonlinear spring for which Hooke's law does not apply and consider the motion of a point mass at the position $x(t)$ with a restoring force $\alpha x(t)^{p}$ under the effect of the damping $\mu x'(t)$. 
Then, Newton's second law leads \eqref{Duffing}. 
The typical example of \eqref{Duffing} is the case of $p=3$, which may be regarded as the special case $k=0$ of the following Duffing equation: 
\begin{equation}\label{Duffing-original}
x''(t)+\mu x'(t)+kx(t)+\alpha x(t)^3=0, \ \ t>0. 
\end{equation}

The above Duffing equation \eqref{Duffing-original} is an important mathematical model which has several applications, not only in nonlinear springs, but also in other fields of science and engineering. 
Actually, \eqref{Duffing-original} appears in some nonlinear signal detections (e.g.~\cite{Let22,Zet17}) and a damped motion of an extensible elastic rod with hinged ends (cf. \cite{B73-1,B73-2,BC76}). Our problem \eqref{Duffing} is a generalization of \eqref{Duffing-original} with $k=0$ (we call it ``Duffing type equation'' here). 
Therefore, we believe that our study of \eqref{Duffing} can be applicable in various fields of science and engineering.

Before introducing our results, let us recall some mathematical known results for \eqref{Duffing}. 
In Nakao~\cite{N77} and Nakao--Hara~\cite{NH12}, it is shown that the global existence of the solutions to \eqref{Duffing} and its decay estimate. More precisely, if $\mu>0$, they derived the following decay estimate of the solution to \eqref{Duffing}: 
\begin{equation}\label{known-result}
E(t):=\frac{1}{2}x'(t)^{2}+\frac{\alpha}{p+1}x(t)^{p+1}\le C(1+t)^{-\frac{p+1}{p-1}}, \ \ t\ge0. 
\end{equation}
Here, we note that they actually treated the more general equations (for details, see the original papers \cite{N77,NH12}). 
Moreover, we remark that the above function $E(t)$ means the mechanical energy, since the first term and the second term of $E(t)$ are the kinetic energy and the potential energy, respectively. 

From \eqref{known-result}, we can conclude that the solution $x(t)$ to \eqref{Duffing} decays at the rate of $t^{-1/(p-1)}$. 
Moreover, for some related results to the decay estimate of the solution $x(t)$, we can also refer to \cite{BC76,GZ13,MN04}. 
In particular, Ball--Carr~\cite{BC76} succeeded to derive the asymptotic profile of the solution $x(t)$. 
Their result suggests that the decay rate $t^{-1/(p-1)}$ of the solution $x(t)$ is optimal. 
Furthermore, for the special case of $p=3$, a more detailed higher-order asymptotic expansion is also obtained in \cite{BC76}. 
From their result, if $p=3$, we can see that not only the decay rate $t^{-1/(p-1)}$ of the solution $x(t)$ but also the decay rate $t^{-2}$ of the energy $E(t)$ is optimal. 

In this paper, we would like to study the large time behavior of the solution to \eqref{Duffing}, from both the perspectives of mathematical and numerical analysis. 
As we mentioned in the above, the decay estimate of the solution has already been obtained for $\mu>0$. 
However, we are able to show that the decay estimate of $x(t)$ can be obtained by a much simpler method than used in the previous papers \cite{BC76,N77,NH12}. 
Actually, by using the energy method, we succeeded to obtain a decay estimate which has the same decay rate $t^{-1/(p-1)}$. 
We note that our method requires only basic calculus. 
Next, we shall numerically investigate the large time behavior of the energy $E(t)$. 
As we have already seen in \eqref{known-result}, $E(t)$ decays at the rate of $t^{-(p+1)/(p-1)}$ if $\mu>0$. 
By virtue of this estimate, we can see the effect of changes in the exponent $p$ on the large time behavior of $E(t)$. 
However, from this result, we cannot understand how changing the damping coefficient $\mu$ or the restoring coefficient $\alpha$ affects its behavior. 
Thus, we shall numerically analyze $E(t)$ from this perspective. 
In order to do that, we need to perform numerical simulations which preserve the structure of the energy. 
Therefore, we shall use the structure-preserving numerical method used in \cite{FM10}. 

\section{Mathematical Analysis}

In this section, we would like to state our mathematical results about \eqref{Duffing}, which related to the decay estimate for the solution $x(t)$. The main result is as follows:  
\begin{thm}\label{main-decay}
Let $p\ge3$ be an odd number, $\mu\ge0$ and $\alpha>0$. Then, for any initial data $(x_{0}, x_{1})\in \mathbb{R}^{2}$, 
there exists a unique global solution $x\in C^{2}([0, \infty))$ to \eqref{Duffing}. Moreover, if $\mu>0$, there exists a positive constant $C_{*}>0$ such that the solution $x(t)$ satisfies the following estimate: 
\begin{equation}\label{sol-decay}
\left|x(t)\right| \le C_{*}t^{-\frac{1}{p-1}}, \ \ t>0. 
\end{equation}
\end{thm}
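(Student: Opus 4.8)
The plan is to separate the global well-posedness from the decay estimate, and to obtain the latter by a multiplier (energy) argument that culminates in a difference inequality of Nakao type. \textbf{Well-posedness.} Writing \eqref{Duffing} as a first-order system for $(x,x')$, the right-hand side $(y,-\mu y-\alpha x^p)$ is a polynomial, hence locally Lipschitz, so the standard Picard existence and uniqueness theorem yields a unique local $C^2$ solution. To globalize, I would use the energy $E(t)$ from \eqref{known-result}: since $p+1$ is even we have $E(t)\ge0$, and differentiating along the flow gives $E'(t)=x'(x''+\alpha x^p)=-\mu x'(t)^2\le0$. Thus $E$ is nonincreasing, so $E(t)\le E(0)$, which bounds $|x'(t)|$ and $|x(t)|$ uniformly and rules out finite-time blow-up; the solution therefore extends to $[0,\infty)$, and this step needs only $\mu\ge0$.

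For the decay ($\mu>0$) the same identity shows $E(t)-E(t+1)=\mu\int_t^{t+1}x'(s)^2\,ds$, so the dissipation over a unit window controls $\int x'^2$. Setting $D(t)^2:=\int_t^{t+1}x'(s)^2\,ds$, the strategy is to bound $E(t+1)$ by an appropriate power of $D(t)$. For the potential part I would multiply \eqref{Duffing} by $x$ and integrate over a window $[t_1,t_2]\subset[t,t+1]$, choosing by the mean value theorem for integrals endpoints at which $x'(t_i)^2\le CD(t)^2$; integration by parts turns $\int xx''$ into $-\int x'^2$ plus a boundary term $[xx']_{t_1}^{t_2}$, and the damping contributes $\frac{\mu}{2}[x^2]_{t_1}^{t_2}$. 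This expresses $\alpha\int x^{p+1}$ through $\int x'^2$ together with these boundary terms, and averaging the energy over $[t_1,t_2]$ gives $E(t+1)\le C\int_{t_1}^{t_2}E\,ds$.

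The heart of the matter, and the main obstacle, is estimating the boundary contributions with the \emph{sharp} power of $D(t)$. Because $p\ge3$, the quadratic $x^2$ is not controlled by $x^{p+1}$ for small $x$, which is precisely why the decay is polynomial rather than exponential, and a crude bound only yields $E\lesssim(1+t)^{-1}$. Instead I would apply Young's inequality with the conjugate exponents $p+1$ and $(p+1)/p$ to the boundary terms: the factors $|x(t_i)|^{p+1}$ are absorbed into $\int x^{p+1}$, while the factors $|x'(t_i)|^{(p+1)/p}$ produce exactly $D(t)^{(p+1)/p}$; a H\"older bound $\int x^2\le(\int x^{p+1})^{2/(p+1)}$ handles $[x^2]_{t_1}^{t_2}$ in the same way. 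Combining everything gives the key estimate $E(t+1)\le C\bigl(E(t)^{1/(p+1)}D(t)+D(t)^{(p+1)/p}\bigr)$.

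Finally, a short case analysis, comparing the two terms on the right and comparing $E(t)$ with $E(t+1)$, converts this into the Nakao-type difference inequality $E(t+1)^{2p/(p+1)}\le C\bigl(E(t)-E(t+1)\bigr)$, valid once $E$ is small; the required eventual smallness follows because the telescoping dissipation is summable, forcing $D(t)\to0$ and hence $E(t)\to0$. An elementary iteration of this difference inequality yields $E(t)\le C(1+t)^{-(p+1)/(p-1)}$, and then $\frac{\alpha}{p+1}|x(t)|^{p+1}\le E(t)$ gives $|x(t)|\le C_{*}t^{-1/(p-1)}$, which is the desired estimate \eqref{sol-decay}.
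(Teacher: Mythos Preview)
Your argument is essentially correct and reproduces the classical Nakao difference-inequality approach (indeed, the method of \cite{N77,NH12} that the paper cites): dissipate over unit windows, multiply by $x$, choose good endpoints, balance boundary terms with Young's inequality, and iterate the resulting recursion $E(t+1)^{2p/(p+1)}\le C\bigl(E(t)-E(t+1)\bigr)$ to get $E(t)\le C(1+t)^{-(p+1)/(p-1)}$. A couple of your bookkeeping remarks are slightly off---for instance, $[x^2]_{t_1}^{t_2}$ is a pointwise boundary term, not an integral, so the H\"older bound you quote does not apply directly; one instead writes $[x^2]_{t_1}^{t_2}=2\int_{t_1}^{t_2}xx'\,ds$ and bounds this by $C\sup|x|\cdot D(t)\le CE(t)^{1/(p+1)}D(t)$---but these are easily repaired and the scheme goes through.

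The paper, however, takes a genuinely different and more direct route, and doing so is in fact its stated purpose. Rather than working with the original energy $E$ over time windows, it introduces the modified functional
\[
\mathcal{E}(t)=\tfrac12 y^2+\tfrac{\mu}{2}xy+\tfrac{\mu^2}{4}x^2+\tfrac{\alpha}{p+1}x^{p+1},
\]
whose cross term $\tfrac{\mu}{2}xy$ makes the dissipation identity read $\mathcal{E}'+\mathcal{H}=0$ with $\mathcal{H}=\tfrac{\mu}{2}y^2+\tfrac{\mu\alpha}{2}x^{p+1}$ controlling \emph{both} kinetic and potential pieces. A short algebraic comparison of $\mathcal{E}^{(p+1)/2}$ with $(1+\mathcal{E}^{(p-1)/2})\mathcal{H}$ then yields the pointwise ODE inequality $\mathcal{E}'+\nu\,\mathcal{E}^{(p+1)/2}/(1+\mathcal{E}^{(p-1)/2})\le0$, which is integrated explicitly via the integrating factor $e^{\nu t-\frac{2}{p-1}\mathcal{E}^{-(p-1)/2}}$ to give $\mathcal{E}(t)\le C t^{-2/(p-1)}$. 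Since $\mathcal{E}\ge\tfrac{\mu^2}{8}x^2$, the bound on $|x|$ follows from the \emph{quadratic} term rather than from $x^{p+1}$. Your approach has the advantage of recovering the sharp energy rate \eqref{known-result} and of being robust enough to transfer to PDE settings; the paper's approach trades that generality for a considerably shorter argument using only elementary calculus, which is precisely the simplification over \cite{N77,NH12} that the authors are advertising.
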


In order to analyze \eqref{Duffing}, let us follow the method used in \cite{MN04}. 
First, we shall rewrite \eqref{Duffing} as the following system of first-order differential equations:  
\begin{align}\label{eq:IVP-1} 
\begin{cases}
x'(t)=y(t), \\
y'(t)=-\alpha x(t)^p-\mu y(t), \ \ t>0, \\
x(0)=x_0, \ y(0)=y_0, \ \ (x_0,y_0)\in \mathbb{R}^2,  
\end{cases}
\end{align}
where $y_0:=x_1$. From the general theory of ordinary differential equations, we can easily prove that there exists a positive constant $T>0$ and a unique local solution $(x, y)\in C^{1}([0, T]; \mathbb{R}^{2})$ to \eqref{eq:IVP-1}. 
Moreover, this local solution can be easily extended globally in time. 
Actually, multiplying $y(t)$ on the both sides of the second equation in \eqref{eq:IVP-1}, we have
\begin{align}
\left(\dfrac{1}{2}y(t)^2+\dfrac{\alpha}{p+1}x(t)^{p+1}\right)'+\mu y(t)^2=0. \label{eq:E1}
\end{align}
Therefore, integrating the both sides of the above result on $[0, t]$, we obtain the following a priori estimate for the solution: 
\begin{align}\label{eq:m-energy} 
E(t)+\mu \int_{0}^{t}y(\tau)^2d\tau=E(0), \ \ t\in [0,T], 
\end{align}
where $E(t)$ is defined in \eqref{known-result}. Therefore, we can continue the local solution and prove the global existence of the solution $(x, y)\in C^{1}([0, \infty); \mathbb{R}^{2})$ to \eqref{eq:IVP-1}, i.e. there exists a unique global solution $x\in C^{2}([0, \infty))$ to \eqref{Duffing} (for details, see e.g. \cite{MN04}). Moreover, we note that \eqref{eq:m-energy} means the energy conservation law if $\mu=0$ and the dissipation law if $\mu>0$, because $E(t)$ represents the mechanical energy.

By virtue of the above discussions, it is sufficient to prove only the decay estimate \eqref{sol-decay}. 
In order to do that, we need to prepare two propositions. First, multiplying $\mu x(t)/2$ on the both sides of the second equation in \eqref{eq:IVP-1}, we obtain 
\begin{align}
\left( \dfrac{\mu}{2}x(t) y(t) + \dfrac{\mu^2}{4}x(t)^2 \right)' +\dfrac{\mu\alpha}{2} x(t)^{p+1} - \dfrac{\mu}{2} y(t)^2 =0.  \label{eq:E2} 
\end{align}
Therefore, combining \eqref{eq:E1} and \eqref{eq:E2}, we have the following relation: 
\begin{align}\label{eq:energy-simple} 
\mathcal{E}'(t)+\mathcal{H}(t) = 0, 
\end{align}
where the functions $\mathcal{E}(t)$ and $\mathcal{H}(t)$ are defined by 
\begin{align}\label{energy-functional}
\begin{split}
&\mathcal{E}(t):=\frac{1}{2}y(t)^2+\frac{\mu}{2}x(t)y(t)+\frac{\mu^2}{4}x(t)^2+\frac{\alpha}{p+1}x(t)^{p+1},  \\
&\mathcal{H}(t):=\frac{\mu}{2}y(t)^2+\frac{\mu\alpha}{2}x(t)^{p+1}. 
\end{split}
\end{align}
In what follows, we shall derive a differential inequality and a decay estimate for $\mathcal{E}(t)$. Actually, in the case of $p=3$, Propositions~\ref{energy-d-ineq} and \ref{energy-decay} below are shown in \cite{MN04}. Our results are one of the generalization of them. Now, let us state such results. We start with introducing the following differential inequality: 
\begin{prop}\label{energy-d-ineq}
Let $p\ge3$ be an odd number, $\mu>0$ and $\alpha>0$. Then, there exists a positive constant $\nu>0$ such that $\mathcal{E}(t)$ satisfies the following inequality:  
\begin{align}\label{eq:d-ineq} 
\mathcal{E}'(t)+\nu\dfrac{\mathcal{E}(t)^{\frac{p+1}{2}}}{1+\mathcal{E}(t)^{\frac{p-1}{2}}}\le 0, \ \ t\ge 0. 
\end{align} 
\end{prop}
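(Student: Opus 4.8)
The plan is to reduce \eqref{eq:d-ineq} to a purely algebraic inequality and then exploit the special structure $x^{p+1}=(x^2)^{(p+1)/2}$. Since \eqref{eq:energy-simple} already gives $\mathcal{E}'(t)=-\mathcal{H}(t)$, the claimed differential inequality is equivalent to the pointwise estimate
\[
\mathcal{H}(t)\ge \nu\,\frac{\mathcal{E}(t)^{\frac{p+1}{2}}}{1+\mathcal{E}(t)^{\frac{p-1}{2}}},
\]
so the time variable drops out entirely and it suffices to compare the two functions $\mathcal{E}$ and $\mathcal{H}$ of $(x,y)\in\R^{2}$ defined in \eqref{energy-functional}. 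Note that because $p$ is odd, $p+1$ is even, hence $x^{p+1}\ge 0$ and every term below is nonnegative.

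First I would record two elementary comparisons. Completing the square,
\[
\tfrac12 y^{2}+\tfrac{\mu}{2}xy+\tfrac{\mu^{2}}{4}x^{2}=\tfrac12\Bigl(y+\tfrac{\mu}{2}x\Bigr)^{2}+\tfrac{\mu^{2}}{8}x^{2},
\]
which is a positive-definite quadratic form in $(x,y)$; adding $\frac{\alpha}{p+1}x^{p+1}\ge 0$ shows that $\mathcal{E}$ is comparable to $x^{2}+y^{2}+x^{p+1}$, while directly $\mathcal{H}$ is comparable to $y^{2}+x^{p+1}$ (both up to positive constants depending on $\mu,\alpha,p$). Moreover $\frac{s^{(p+1)/2}}{1+s^{(p-1)/2}}\le\min\{s,\,s^{(p+1)/2}\}$ for $s\ge 0$, so it is enough to prove $\mathcal{H}\ge\nu\min\{\mathcal{E},\mathcal{E}^{(p+1)/2}\}$, that is, $\mathcal{H}\gtrsim\mathcal{E}$ for large $\mathcal{E}$ and $\mathcal{H}\gtrsim\mathcal{E}^{(p+1)/2}$ for small $\mathcal{E}$.

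The heart of the argument is then the algebraic claim, with $a=y^{2}$, $b=x^{2}$, $c=x^{p+1}$ subject to the constraint $c=b^{(p+1)/2}$ and $s=a+b+c$: that $a+c\gtrsim s$ for $s\ge 1$ and $a+c\gtrsim s^{(p+1)/2}$ for $s\le 1$. The only discrepancy between $\mathcal{H}\sim a+c$ and $\mathcal{E}\sim s$ is the term $b=x^{2}$, which $\mathcal{H}$ does not control. For $s\ge 1$ I would show $b\lesssim a+c$: if $b\ge 1$ then $c=b^{(p+1)/2}\ge b$; if $b\le 1$, the bound $s\ge 1$ forces $a+c$ to be bounded below, so $b\le 1\lesssim a+c$. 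For $s\le 1$ I would use the subadditivity bound $(a+b+c)^{(p+1)/2}\lesssim a^{(p+1)/2}+b^{(p+1)/2}+c^{(p+1)/2}$ together with the crucial identity $b^{(p+1)/2}=c$ and the facts $a^{(p+1)/2}\le a$, $c^{(p+1)/2}\le c$ (valid since $a,c\le s\le 1$ and $\frac{p+1}{2}\ge 1$), which collapse the right-hand side to a multiple of $a+c$.

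I expect the main obstacle to be exactly this treatment of the $x^{2}$ term: it appears in $\mathcal{E}$ but not in $\mathcal{H}$, so a naive comparison fails in the potential-dominated regime. The resolution is the identity $x^{p+1}=(x^{2})^{(p+1)/2}$, which ties the uncontrolled $x^{2}$ to the controlled $x^{p+1}$ once the power $(p+1)/2$ is applied. A minor point to check is that the two regimes match with no gap: since $\frac{s^{(p+1)/2}}{1+s^{(p-1)/2}}$ interpolates precisely between $s^{(p+1)/2}$ (small $s$) and $s$ (large $s$), no intermediate range needs separate treatment, and one may take $\nu$ to be the smaller of the two constants produced above.
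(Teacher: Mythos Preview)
Your argument is correct, and the central observation---that the only ``bad'' term $x^{2}$ in $\mathcal{E}$ is recovered through the identity $x^{p+1}=(x^{2})^{(p+1)/2}$---is exactly the one the paper exploits. The organization, however, is genuinely different. You pass through the equivalence $\dfrac{s^{(p+1)/2}}{1+s^{(p-1)/2}}\sim\min\{s,\,s^{(p+1)/2}\}$ and then split into the regimes $s\ge 1$ and $s\le 1$, handling each separately; the paper instead proves the single inequality $\mathcal{E}^{(p+1)/2}\le C\bigl(1+\mathcal{E}^{(p-1)/2}\bigr)\mathcal{H}$ directly, with no case analysis: it upper--bounds $\mathcal{E}$ by $C(y^{2}+x^{2}+x^{p+1})$, raises to the $(p+1)/2$ power, writes $y^{p+1}=y^{p-1}\cdot y^{2}$ and $x^{(p+1)^{2}/2}=(x^{p+1})^{(p-1)/2}\cdot x^{p+1}$, and factors out $\mathcal{H}$ while bounding the leftover factor $1+y^{p-1}+(x^{p+1})^{(p-1)/2}$ by $1+\mathcal{E}^{(p-1)/2}$ via the lower bound on $\mathcal{E}$. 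Your route makes the interpolation structure of the right--hand side more transparent; the paper's route is shorter and avoids the boundary $s=1$ altogether. One small point in your sketch deserves an extra sentence: in the subcase $s\ge 1$, $b\le 1$, the claim ``$s\ge 1$ forces $a+c$ bounded below'' uses the constraint $c=b^{(p+1)/2}$ (e.g.\ if $b\ge \tfrac12$ then $c\ge 2^{-(p+1)/2}$, while if $b\le\tfrac12$ then $a+c=s-b\ge\tfrac12$); without mentioning this, the reader might object that $a+c=s-b$ can a priori vanish.
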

\begin{proof}
First, we would like to evaluate $\mathcal{E}(t)$ from below as follows: 
\begin{align*}
\mathcal{E}(t)
=\frac{1}{4}\left(y(t)+\mu x(t)\right)^2 +\frac{1}{4}y(t)^2+\frac{\alpha}{p+1}x(t)^{p+1} \ge \frac{1}{4}y(t)^2+\frac{\alpha}{p+1}x(t)^{p+1}.
\end{align*}
Since the right hand side of the above is positive, we have 
\begin{align}
\mathcal{E}(t)^{\frac{p-1}{2}} 
\ge \left(\frac{y(t)}{2}\right)^{p-1} + \left(\frac{\alpha}{p+1}x(t)^{p+1}\right)^{\frac{p-1}{2}}. \label{eq:energy-lower-2} 
\end{align}
On the other hand, it follows from Young's inequality that 
\begin{align}
\mathcal{E}(t)
\le \frac{2+\mu}{4}y(t)^2 +\frac{\mu(1+\mu)}{4} x(t)^2 +\frac{\alpha}{p+1}x(t)^{p+1}. \label{eq:energy-upper}
\end{align}
Therefore, \eqref{eq:energy-upper}, \eqref{eq:energy-lower-2} and \eqref{energy-functional} yield
\begin{align}
\mathcal{E}(t)^{\frac{p+1}{2}} 
&\le C\left\{ y(t)^{p+1} +x(t)^{p+1} +x(t)^{\frac{(p+1)^2}{2}}\right\} \nonumber \\
&\le C\left\{y(t)^{p-1}\cdot \frac{\mu}{2}y(t)^2+\left(1+ x(t)^{\frac{p^2-1}{2}}\right) \cdot \frac{\mu \alpha}{2}x(t)^{p+1}\right\} \nonumber \\ 
&\le C\left\{1+\left(\frac{y(t)}{2}\right)^{p-1}+\left(\frac{\alpha}{p+1}x(t)^{p+1}\right)^{\frac{p-1}{2}}\right\}\mathcal{H}(t) \nonumber \\
&\le C\left\{1+\mathcal{E}(t)^{\frac{p-1}{2}}\right\} \mathcal{H}(t), \ \ t\ge0. \label{eq:d-ineq-pre}
\end{align}
Finally, combining \eqref{eq:d-ineq-pre} and \eqref{eq:energy-simple}, we can conclude that there exists a positive constant $\nu>0$ such that the desired inequality \eqref{eq:d-ineq} holds. 
\end{proof}

By virtue of the above result, we can derive the decay estimate for $\mathcal{E}(t)$: 
\begin{prop}\label{energy-decay}
Let $p\ge3$ be an odd number, $\mu>0$ and $\alpha>0$. Then, there exists a positive constant $C_{\dag}>0$ such that $\mathcal{E}(t)$ satisfies the following estimate:  
\begin{align}\label{eq:energy-decay} 
\mathcal{E}(t)\leq C_{\dag} t^{-\frac{2}{p-1}}, \ \ t>0. 
\end{align} 
\end{prop}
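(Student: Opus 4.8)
The plan is to integrate the differential inequality from Proposition~\ref{energy-d-ineq} to extract the polynomial decay rate. Starting from \eqref{eq:d-ineq}, I would analyze the ordinary differential inequality
\begin{align*}
\mathcal{E}'(t) \le -\nu\,\frac{\mathcal{E}(t)^{\frac{p+1}{2}}}{1+\mathcal{E}(t)^{\frac{p-1}{2}}}, \qquad t\ge 0.
\end{align*}
The first step is to observe that $\mathcal{E}(t)$ is nonincreasing (its derivative is nonpositive), so $\mathcal{E}(t)\le \mathcal{E}(0)$ for all $t\ge 0$; this bounds $\mathcal{E}$ uniformly and lets me control the denominator $1+\mathcal{E}(t)^{(p-1)/2}$ from above by a constant, say $1+\mathcal{E}(0)^{(p-1)/2}$. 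Consequently the inequality simplifies to $\mathcal{E}'(t)\le -c\,\mathcal{E}(t)^{(p+1)/2}$ for some constant $c>0$ depending on $\nu$ and $\mathcal{E}(0)$.

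The second step is to integrate this simplified inequality by the standard separation-of-variables technique for decay estimates. Since the exponent $(p+1)/2 > 1$ (as $p\ge 3$), I would divide by $\mathcal{E}(t)^{(p+1)/2}$ and recognize that the left side is, up to sign and a constant factor, the derivative of $\mathcal{E}(t)^{-(p-1)/2}$. Concretely, computing $\frac{d}{dt}\mathcal{E}(t)^{-\frac{p-1}{2}} = -\frac{p-1}{2}\mathcal{E}(t)^{-\frac{p+1}{2}}\mathcal{E}'(t) \ge \frac{(p-1)c}{2}$ gives a lower bound on the growth of $\mathcal{E}(t)^{-(p-1)/2}$. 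Integrating from $0$ to $t$ yields
\begin{align*}
\mathcal{E}(t)^{-\frac{p-1}{2}} \ge \mathcal{E}(0)^{-\frac{p-1}{2}} + \frac{(p-1)c}{2}\,t \ge \frac{(p-1)c}{2}\,t.
\end{align*}
Raising both sides to the power $-2/(p-1)$ (which reverses the inequality) produces the bound $\mathcal{E}(t)\le C_{\dag}\,t^{-2/(p-1)}$ with $C_{\dag}=\bigl(\frac{(p-1)c}{2}\bigr)^{-2/(p-1)}$, which is exactly \eqref{eq:energy-decay}.

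One technical point deserving care is the possibility that $\mathcal{E}(t_0)=0$ at some finite $t_0$, in which case division by $\mathcal{E}(t)^{(p+1)/2}$ is illegitimate. However, since $\mathcal{E}(t)$ controls $x(t)^{p+1}$ and $y(t)^2$ from above and is nonincreasing, if $\mathcal{E}$ ever vanishes the estimate \eqref{eq:energy-decay} holds trivially thereafter; alternatively one restricts attention to the interval where $\mathcal{E}(t)>0$ and notes the monotonicity keeps the argument valid. I expect the main obstacle to be purely bookkeeping: tracking the constants carefully through the simplification of the denominator and the integration, and handling the degenerate case cleanly. The structural heart of the proof — that a differential inequality of the form $\mathcal{E}'\le -c\,\mathcal{E}^{(p+1)/2}$ forces the algebraic decay rate $t^{-2/(p-1)}$ — is a routine but essential computation, and no genuinely deep difficulty is anticipated once Proposition~\ref{energy-d-ineq} is in hand.
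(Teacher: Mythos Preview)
Your argument is correct and in fact more direct than the paper's. You exploit the monotonicity $\mathcal{E}(t)\le\mathcal{E}(0)$ to absorb the denominator $1+\mathcal{E}(t)^{(p-1)/2}$ into a constant, reducing \eqref{eq:d-ineq} to the clean power-type inequality $\mathcal{E}'\le -c\,\mathcal{E}^{(p+1)/2}$, which integrates by separation of variables in one line. The paper instead keeps the full inequality and multiplies through by the integrating factor $e^{\nu t-\frac{2}{p-1}\mathcal{E}(t)^{-(p-1)/2}}$, shows that $\bigl(e^{\nu t-\frac{2}{p-1}\mathcal{E}(t)^{-(p-1)/2}}\mathcal{E}(t)\bigr)'\le 0$, and then takes logarithms and uses the elementary bound $s^{(p-1)/2}\log s\ge -\tfrac{2}{e(p-1)}$ to extract the decay rate. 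Your route is shorter and requires less machinery; the paper's integrating-factor trick is slicker in that it handles the rational right-hand side in one stroke without first crudely bounding the denominator, but for this problem that refinement buys nothing extra, since the final decay rate and the dependence of $C_\dag$ on $\mathcal{E}(0)$ are the same either way. Your treatment of the degenerate case $\mathcal{E}=0$ is adequate (nonnegativity of $\mathcal{E}$ follows from the completed-square form used elsewhere in the paper).
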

\begin{proof}
First, multiplying $e^{\nu t-\frac{2}{p-1}\mathcal{E}(t)^{-\frac{p-1}{2}}}$ on the both sides of the differential inequality \eqref{eq:d-ineq}, we are able to see that 
\begin{align*}
&e^{\nu t-\frac{2}{p-1}\mathcal{E}(t)^{-\frac{p-1}{2}}} \mathcal{E}'(t)
+\nu e^{\nu t-\frac{2}{p-1}\mathcal{E}(t)^{-\frac{p-1}{2}}}\frac{\mathcal{E}(t)^{\frac{p+1}{2}}}{1+\mathcal{E}(t)^{\frac{p-1}{2}}} \le 0 \\
\Longleftrightarrow \ \ &e^{\nu t-\frac{2}{p-1}\mathcal{E}(t)^{-\frac{p-1}{2}}} \{\mathcal{E}'(t) + \nu \mathcal{E}(t)^{\frac{p+1}{2}} \} + e^{\nu t-\frac{2}{p-1}\mathcal{E}(t)^{-\frac{p-1}{2}}} \mathcal{E}'(t) \mathcal{E}(t)^{\frac{p-1}{2}} \le 0 \\
\Longleftrightarrow \ \ &e^{\nu t-\frac{2}{p-1}\mathcal{E}(t)^{-\frac{p-1}{2}}}
\left\{\frac{\mathcal{E}'(t)}{\mathcal{E}(t)^{\frac{p+1}{2}}}+ \nu \right\} \mathcal{E}(t) 
+e^{\nu t-\frac{2}{p-1}\mathcal{E}(t)^{-\frac{p-1}{2}}} \mathcal{E}'(t) \le 0. 
\end{align*}
Therefore, $\mathcal{E}(t)$ satisfies the following inequality: 
\begin{align*}
\left\{e^{\nu t-\frac{2}{p-1}\mathcal{E}(t)^{-\frac{p-1}{2}}} \mathcal{E}(t)\right\}'\le 0, \ \ t\ge 0. 
\end{align*}
Integrating the both sides of the above result on $[0, t]$, we have  
\begin{align*}
e^{\nu t-\frac{2}{p-1}\mathcal{E}(t)^{-\frac{p-1}{2}}}\mathcal{E}(t)\leq e^{-\frac{2}{p-1}\mathcal{E}(0)^{-\frac{p-1}{2}}}\mathcal{E}(0)=:\mathcal{E}_{0}. 
\end{align*}
Taking the logarithm on the both sides of the above, and then multiplying $\mathcal{E}(t)^{\frac{p-1}{2}}$ on its result, 
we arrive at the following inequality: 
\begin{align}
\nu t \mathcal{E}(t)^{\frac{p-1}{2}}-\dfrac{2}{p-1}+\mathcal{E}(t)^{\frac{p-1}{2}}\log \mathcal{E}(t) 
\le \mathcal{E}(t)^{\frac{p-1}{2}}\log \mathcal{E}_{0}. \label{eq:up-low-pre}
\end{align}

In what follows, let us evaluate the both sides of \eqref{eq:up-low-pre}. 
First, we shall evaluate the right hand side from above. It follows from \eqref{eq:energy-simple} and $\mathcal{H}(t)\ge0$ that $\mathcal{E}'(t)\le 0$, and then $\mathcal{E}(t)\le \mathcal{E}(0)$ holds for all $t\ge0$. Therefore, we have 
\begin{align}\label{eq:energy-up} 
\mathcal{E}(t)^{\frac{p-1}{2}}\log\mathcal{E}_{0} 
\leq 
\mathcal{E}(0)^{\frac{p-1}{2}}\left|\log\mathcal{E}_{0}\right|. 
\end{align}
Next, we would like to deal with the left hand side of \eqref{eq:up-low-pre}. 
Now, we shall use a fact that $f(s):=s^{\frac{p-1}{2}}\log{s}\geq -\dfrac{2}{e(p-1)}$ holds for all $s>0$. By virtue of this fact, substituting $s=\mathcal{E}(t)$ into the above $f(s)$, we obtain 
\begin{align*}
\mathcal{E}(t)^{\frac{p-1}{2}}\log{\mathcal{E}(t)}\ge -\frac{2}{e(p-1)}. 
\end{align*}
Thus, we can evaluate the left hand side of \eqref{eq:up-low-pre} from below  as follows: 
\begin{align} \label{eq:energy-low} 
\nu t \mathcal{E}(t)^{\frac{p-1}{2}}-\dfrac{2}{p-1}+\mathcal{E}(t)^{\frac{p-1}{2}}\log{\mathcal{E}(t)} 
\geq 
\nu t \mathcal{E}(t)^{\frac{p-1}{2}}-\dfrac{2(e+1)}{e(p-1)}. 
\end{align}

Finally, it follows from \eqref{eq:up-low-pre}, \eqref{eq:energy-up} and \eqref{eq:energy-low} that 
\begin{align*}
\nu t \mathcal{E}(t)^{\frac{p-1}{2}}-\dfrac{2(e+1)}{e(p-1)}
\le \mathcal{E}(0)^{\frac{p-1}{2}}\left|\log\mathcal{E}_{0}\right|. 
\end{align*}
Therefore, we can eventually see that there exists positive constant $C_{\dag}>0$ such that $\mathcal{E}(t)\leq C_{\dag} t^{-\frac{2}{p-1}}$ holds for all $t>0$. This completes the proof of the desired estimate \eqref{eq:energy-decay}. 
\end{proof}

In the rest of this section, we shall give the proof of the decay estimate \eqref{sol-decay}:  
\medskip

\noindent{\bf End of the proof of Theorem~\ref{main-decay}.} 
From the definition of $\mathcal{E}(t)$ in \eqref{energy-functional}, we have  
\begin{align*}
\mathcal{E}(t)
=\frac{1}{2}\left(y(t)+\frac{\mu}{2}x(t)\right)^2 +\frac{\mu^{2}}{8}x(t)^2+\frac{\alpha}{p+1}x(t)^{p+1}\ge \frac{\mu^{2}}{8}x(t)^2. 
\end{align*}
Thus, by virtue of \eqref{eq:energy-decay}, we can conclude that the desired estimate \eqref{sol-decay} is true. \qed

\section{Numerical Analysis}

In this section, let us treat the numerical analysis for \eqref{Duffing}, by using the structure-preserving numerical method used in \cite{FM10}. 
Now, we consider \eqref{eq:IVP-1} on the interval $[0, T]$ with sufficiently large $T>0$. Let $N\in \mathbb{N}$ and split the interval $[0, T ]$ into $N$-th parts with the time mesh size $\Delta t$, and hence the relation $T=N\Delta t$ holds. 
In finite difference method, we denote a solution to the corresponding discrete problem at $t=n\Delta t$ ($n = 0,1, \cdots, N$) by $(x^{(n)}, y^{(n)})$. 
Also, for an approximation to derivatives $x'(t)$ and $y'(t)$, let us define the difference operator: 
\[\displaystyle \delta_{n}^{+}f^{(n)}:=\frac{f^{(n+1)}-f^{(n)}}{\Delta t}. \]
Then, we would like to follow the technique used in \cite{FM10} and construct the following structure-preserving difference scheme for \eqref{eq:IVP-1} as follows:  
\begin{align}\label{SP} 
\begin{cases}
\displaystyle \delta_{n}^{+}x^{(n)}=\frac{1}{2}\left(y^{(n+1)}+y^{(n)}\right), \\
\displaystyle \delta_{n}^{+}y^{(n)}=-\frac{\alpha}{p+1}\sum_{l=0}^{p}\left(x^{(n+1)}\right)^{p-l}\left(x^{(n)}\right)^{l}-\frac{\mu}{2}\left(y^{(n+1)}+y^{(n)}\right). 
\end{cases}
\end{align}

Here, let us mention that a discrete version of \eqref{eq:m-energy} holds for the above scheme \eqref{SP}. 
First, we define the corresponding discrete energy as follows: 
\begin{equation*}
E^{(n)}:=\frac{1}{2}\left(y^{(n)}\right)^{2}+\frac{\alpha}{p+1}\left(x^{(n)}\right)^{p+1}. 
\end{equation*}
Multiplying $\left(y^{(n+1)}+y^{(n)}\right)/2$ on the both sides of the second equation in \eqref{SP} and using a fact $a^{p+1}-b^{p+1}=(a-b)(a^{p}+a^{p-1}b+\cdots +ab^{p-1}+b^{p})$, we have
\begin{align*}
\frac{E^{(n+1)}-E^{(n)}}{\Delta t}+\frac{\mu}{4}\left(y^{(n+1)}+y^{(n)}\right)^{2}=0. 
\end{align*}
Therefore, the discrete energy $E^{(n)}$ satisfies the energy conservation law and the dissipation law corresponding to \eqref{eq:m-energy}, in the following sense: 
\begin{equation*}
E^{(n)}+\frac{\mu}{4}\sum_{l=0}^{n-1}\left(y^{(l+1)}+y^{(l)}\right)^{2} \Delta t
=E^{(0)}, \ \ n = 1, 2, \cdots, N. 
\end{equation*}

For basic literatures of the structure-preserving numerical method, let us refer to \cite{FM10} and also references therein. Moreover, for the more mathematical topics about the structure-preserving numerical method related to nonlinear differential equations, such as the global existence of the solution to \eqref{SP} or its error estimates, are well summarized in recent papers by Yoshikawa~\cite{Y17,Y19}. Thus, for the theoretical parts of \eqref{SP}, we shall leave them to these papers. 

\bigskip
\noindent
\textbf{\bf{Numerical Results}} 

\medskip

Finally, we shall explain our numerical simulations for $E(t)$. 
We set the data as $(x_0, y_0)=(2, 0)$ and performed the calculations for $T=5000$ with $\Delta t = 0.01$. The following Figure~\ref{result} shows the simulation results for $E(t)$ corresponding to $\mu=0$, $\mu=0.1$, $\mu=1$, $\mu=10$ and $\mu=100$, for $(p, \alpha)=(3, 1)$, $(p, \alpha)=(3, 100)$, $(p, \alpha)=(5, 1)$, $(p, \alpha)=(5, 100)$, $(p, \alpha)=(7, 1)$ and $(p, \alpha)=(7, 100)$. 
The decay rate $t^{-(p+1)/(p-1)}$ appeared in \eqref{known-result} is also shown in the figure. 
First, we can confirm that the conservation law \eqref{eq:m-energy} holds when $\mu=0$ for all cases. 
When $\mu>0$, contrary to our intuition, we can see that, the larger $\mu$ is, the greater the energy, for $t\gg1$. 
Also, comparing the cases of $\alpha=1$ and $\alpha=100$, it can be seen that the energy is smaller in the case of $\alpha=100$. 

\begin{figure}
\centering
\includegraphics[width=\linewidth]{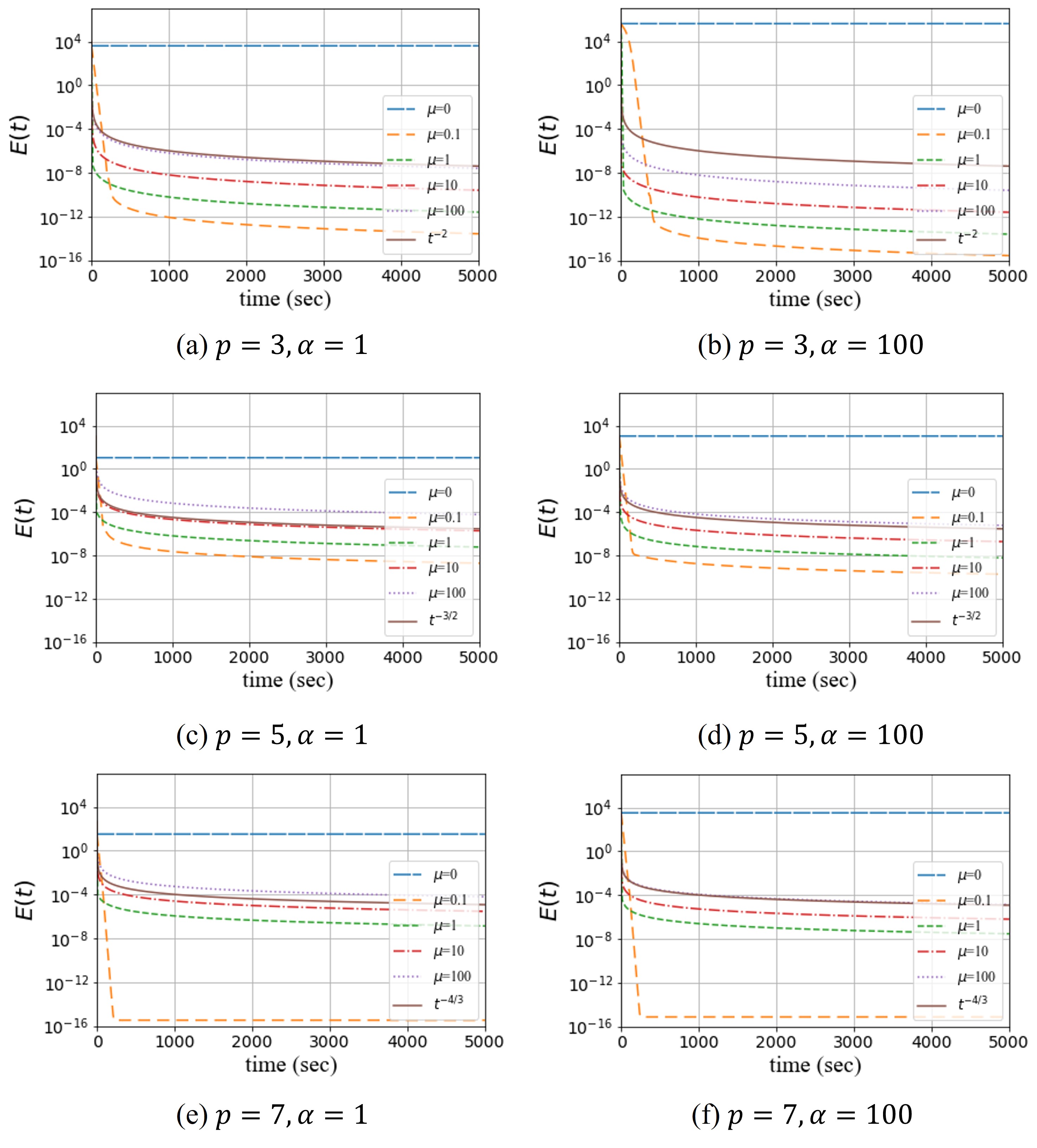}
\caption{Numerical results for $E(t)$ by using a structure-preserving numerical schemes.}\label{result}
\end{figure}

\newpage

\section*{Acknowledgments}

This study is supported by Grant-in-Aid for Young Scientists Research No.22K13939, Japan Society for the Promotion of Science. 

\section*{Disclosure of Interests}

The authors have no competing interests to declare that are relevant to the content of this article. 



\medskip
\par\noindent
\begin{flushleft}
Yusuke Kunimoto\\
Department of Civil Engineering, Kyushu University, \\
Fukuoka, 819-0395 Japan, \\
kunimoto.yusuke.222@s.kyushu-u.ac.jp

\bigskip
\par\noindent
Ikki Fukuda\\
Faculty of Engineering, Shinshu University, \\
Nagano, 380-8553, Japan\\
E-mail: i\_fukuda@shinshu-u.ac.jp
\end{flushleft}

\end{document}